\newtheorem{thm}{Theorem}[section]  
\newtheorem*{un-no-thm}{Theorem}
\newtheorem{cor}[thm]{Corollary}     
\newtheorem{lem}[thm]{Lemma}         
\newtheorem{prop}[thm]{Proposition}
\newtheorem{bigthm}{Theorem}
\newtheorem{bigadd}[bigthm]{Addendum}
\theoremstyle{definition}
\newtheorem{defn}[thm]{Definition}   
\theoremstyle{definition}
\theoremstyle{definition}
\theoremstyle{remark}
\newtheorem{rem}[thm]{Remark}
\newtheorem{rems}[thm]{Remarks}
\theoremstyle{Question}
\newtheorem*{mainques}{Question}
\theoremstyle{remark}
\newtheorem{construct}[thm]{Construction}
\newtheorem{hypo}[thm]{Hypothesis}
\newtheorem{notation}[thm]{Notation}
\newtheorem*{acks}{Acknowledgements}
\newtheorem*{out}{Outline}
\newtheorem*{intro-rem}{Remark}
\newtheorem*{intro-rems}{Remarks}
\theoremstyle{remark}
\newtheorem{ex}[thm]{Example}
\newcommand{\blocktheorem}[1]{%
  \csletcs{old#1}{#1}
  \csletcs{endold#1}{end#1}
  \RenewDocumentEnvironment{#1}{o}
    {\par\addvspace{1.5ex}
     \noindent\begin{minipage}{\textwidth}
     \IfNoValueTF{##1}
       {\csuse{old#1}}
       {\csuse{old#1}[##1]}}
    {\csuse{endold#1}
     \end{minipage}
     \par\addvspace{1.5ex}}
}
\newcommand{\Mdef}[2]{\newcommand{#1}{\relax\ifmmode #2 \else $#2$\fi}}
\def\:{\colon\!}
\Mdef{\CV}{\mathcal{V}}
\Mdef{\smsh}{\wedge}
\Mdef{\CF}{\mathcal{F}}
\Mdef{\CE}{\mathcal{E}}
\Mdef{\CS}{\mathcal{S}}
\newcommand{\R}{\mathbb{R}}
\newcommand{\Z}{\mathbb{Z}}
\newcommand{\CP}{\mathbb{C}\mathrm{P}}
\newcommand\restr[2]{{ 
  \left.\kern-\nulldelimiterspace 
  #1 
  \vphantom{\big|} 
  \right|_{#2} 
}}
\newcommand{\ra}{\rightarrow}
\title{Exciton scattering via algebraic topology}
\author[M.~J. Catanzaro]{Michael J.\ Catanzaro}
\address{Dept.~ of Mathematics\\ 
                University of Florida \\
                Gainesville, FL 32611}
\email{catanzaro@ufl.edu}
\author[V.~I. Chernyak]{Vladimir Y.\ Chernyak}
\address{Dept.~  of Chemistry\\ 
                Wayne State University\\ 
                Detroit, MI 48202}
\email{chernyak@chem.wayne.edu}
\author[J.~R. Klein]{John R.\ Klein}
\address{Dept.~  of Mathematics\\
                Wayne State University\\
                Detroit, MI 48202}
\email{klein@math.wayne.edu}
\subjclass[2010]{Primary: 92E10, 57R19, 55N45; Secondary: 57N80, 81V55}
\date{\today}
\begin{document}
\maketitle

\begin{abstract} 
  This  paper introduces an intersection theory problem
  for maps into a smooth manifold equipped with a stratification.
 We investigate the problem in the special case
 when the target is the unitary group $U(n)$
 and the domain is a circle.  The first main result is an 
  index theorem that equates a global intersection index
   with a finite sum of locally defined intersection indices. The local indices
  are integers arising from the geometry of the stratification. 
  
  The result is used to study a well-known
  problem in chemical physics, namely, the problem of enumerating the 
  electronic excitations (excitons) of a molecule equipped with
  scattering data. 
  \end{abstract}

\setcounter{tocdepth}{1}
\tableofcontents
\addcontentsline{file}{sec_unit}{entry}


\section{Introduction}\label{sec:intro}

This paper has two objectives. The first objective is
to apply algebraic topology to a problem in chemical physics. The problem, when
translated into topology, is a special case of a more general problem which
is both natural and elementary to state. 
Our other objective is an investigation of the general problem,
which we hope will be of independent interest to mathematicians.

In its barest form, the chemical physics 
problem involves calculating the set of solutions
to a system of linear equations parametrized by the circle. The equations 
are derived from a finite graph equipped with additional structure, 
where the graph is a mathematical representation of a molecule. 
The set of nontrivial solutions of the system of equations is finite 
and the problem is to enumerate them, taking multiplicity into account. 
The solutions are called {\it excitons.} We refer the reader to
the end of this section for some of the historical background and to the chemical
physics paper \cite{Catanzaro:CountingExc} for a more extensive treatment.

To translate the problem into
 algebraic topology, the equations first need to 
be converted to a compact
form.  The requisite form is a parametrized $(+1)$-eigenvalue problem\begin{equation} \label{eqn:eigenvalue-eqn}
\Gamma(z)\psi = \psi\, ,  \quad \text{for } z\in S^1\, , \,\,  \Gamma(z) \in U(n)\, ,\,
\, \psi \in \mathbb{C}^n\, . 
\end{equation}
Quantum mechanics suggests that for all but finitely many points $z$,  equation \eqref{eqn:eigenvalue-eqn} has no
non-trivial solutions, and this will be taken as an axiom. 
Consequently, the solution to the problem is reduced to 
that of enumerating with multiplicity the
set of points $z \in S^1$ for which \eqref{eqn:eigenvalue-eqn} has a non-trivial solution.

From the perspective of algebraic topology, an advantage of equation \eqref{eqn:eigenvalue-eqn} is that it recasts the 
problem as a kind of intersection theory question in the unitary group $U(n)$: 
the  number of excitons is just the intersection number of the curve $\Gamma(S^1)$ with the  subspace $D_1U(n) \subset U(n)$
consisting of unitary matrices having non-trivial (+1)-eigenspace.
However, there is a catch: for $n \ge 2$, the space $D_1U(n)$ is not a manifold.
Nevertheless, $D_1U(n)$ is a stratified space and the intersection
problem turns out to be tractable.

\subsection{An intersection problem} The intersection problem alluded
to above is a special case of a more general problem, whose investigation
is the other goal of this paper.
Suppose $M$ is a closed smooth oriented manifold  of dimension $m$
equipped with a filtration by closed subspaces
\[
\emptyset= M_{-1} \subset M_0 \subset M_1\subset  \cdots \subset M_m = M
\]
defining a manifold stratified space \cite{Hughes}.
This means that the strata $M_{(j)} := M_j \setminus M_{j-1}$ are smooth manifolds
of dimension $j$, some which may be empty, and they satisfy the frontier condition: if $S$ and $T$ are strata such that $S\cap \bar T\ne \emptyset$, then
$S\subset \bar T$ (here $\bar T$ denotes the closure of $T$). Our convention
will be to omit the term $M_j$ from the filtration whenever
the corresponding stratum $M_{(j)}$ is empty.


For a fixed  degree $q$, set $Q := M_q$.  
On occasion, 
the singular homology group $H_q(Q;\Bbb Z)$ is found to be infinite cyclic.\footnote{This phenomenon is not as infrequent as one might initially suppose: 
For example, it occurs when
 the stratum $M_{(q)}$ is connected
 and the boundary homomorphism $H_q(M_q,M_{q-1};\Z ) \to
H_{q-1}(M_{q-1};\Z)$ has non-trivial kernel.}
When this happens,
an orientation of the manifold 
$M_{(q)}$ determines a preferred generator.
Denote the generator  by $\mu_q$.

Suppose that $f\: P \to M$ is a smooth map where $P$ is a closed oriented
manifold of dimension $p := m-q$. Let $[P]\in H_p(P;\Bbb Z)$ denote the fundamental class.
Then
the {\it homological intersection number} (or {\it global intersection index}) is defined as
\begin{equation} \label{eqn:int-number}
\alpha_f :=   \mu_q  \cdot f_\ast([P]) \in \Bbb Z \, ,
\end{equation}
where $\cdot\: H_q(M;\Bbb Z ) \otimes H_p(M;\Bbb Z) \to \Bbb Z$ is the intersection pairing.

Henceforth, assume
that the preimage 
\[
\Sigma := f^{-1}(Q)
\]
is a finite set of points.
\medskip

\begin{mainques} 
Is there a description of
 the homological intersection number \eqref{eqn:int-number}
given in terms of the restriction of $f$ to an arbitrarily small 
neighborhood of $\Sigma$?
\end{mainques} 

Stated another way, the question asks 
for a formula for the global invariant \eqref{eqn:int-number} 
in terms of local invariants coming from the differential topology 
of the map $f$ near the set
$\Sigma$.

In certain cases, the answer to the above question is well-known and classical. 
For example, if $Q\subset M$ happens to be a closed submanifold 
with fundamental class $\mu_q$ and $f$ happens to be transverse to $Q$, then
the homological intersection number
 is just the sum over the points of $\Sigma$ counted with sign.
However, in the case addressed in this paper, 
$Q$ is not a manifold and
the function $f$ is not assumed to satisfy any transversality.

In the case considered in the paper, $M$ is  $U(n)$, the unitary group
 of $n\times n$ matrices, which is
a compact smooth manifold of dimension $n^2$. The stratification of $U(n)$ is given
by the filtration
\begin{equation} \label{eqn:filtration-intro}
D_nU(n)  \subset \cdots  \subset D_1 U(n) \subset D_0U(n) = 
U(n)\, ,
\end{equation}
where $D_kU(n)$ is the set of matrices in $U(n)$ that fix
a vector subspace of $\Bbb C^n$ of complex dimension at least $k$. In this way of indexing the filtration, $M_{n^2-k^2} = D_{k}U(n)$. Note that
$D_nU(n)$ is the one point space given by the identity matrix.
Call \eqref{eqn:filtration-intro} the {\it multiplicity stratification}
of $U(n)$.

If we set $Q := D_1U(n)$, then
 $Q$ consists of the matrices in $U(n)$ having eigenvalue $+1$.
We will equip $U(n)$ with a CW decomposition whose $(n^2-1)$-skeleton 
is $Q$ and we will show that $H_{q}(Q;\Bbb Z)$ (for $q= n^2-1$)
is infinite cyclic with preferred generator $\mu$.
If $n \ge 2$, then $Q$ is not a manifold; see Example \ref{ex:U2} below. 

A main result (Theorem \ref{bigthm:indexthm}) of this paper is a kind
of {\it index theorem}. 
The crucial difference with the classical case
is that the sign of a point $\zeta\in \Sigma$
is now replaced by a {\it local index} $\iota_\zeta$, given by 
 the net change in the number of eigenvalues of $f(z)$ passing through $+1 \in U(1)$ 
as nearby points $z\in S^1$ pass through $\zeta$ in a counterclockwise direction
(see \S \ref{subsec:local} for details).

\begin{bigthm}[Index Theorem] \label{bigthm:indexthm} Assume
$f \: S^1 \to U(n)$ is smooth and the preimage 
$\Sigma =  f^{-1}(D_1U(n))$ is discrete. 
Then the global intersection index \eqref{eqn:int-number} of $f$ equals the
sum of the local indices, i.e.,  
\[
\alpha_f  = \sum_{\zeta \in \Sigma} \iota_\zeta\, .
\]
\end{bigthm}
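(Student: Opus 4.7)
My plan is to reduce the global intersection index $\alpha_f$ to a sum of local contributions via a generic perturbation, and then to identify each local contribution with $\iota_\zeta$.

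Step 1 (localization and perturbation). Refine the stratification of $Q = D_1 U(n)$ by the subvarieties $D_k U(n)$ of matrices whose $(+1)$-eigenspace has dimension at least $k$. The top stratum $D_1 U(n) \setminus D_2 U(n)$ is a smooth oriented codimension-$1$ submanifold of $U(n)$, while for $k \ge 2$ the locus $D_k U(n)$ has real codimension $k^2 \ge 4$ in $U(n)$ (the $U(k)$-conjugation action on $U(k)$ fixes the identity as a single point). Since $\alpha_f$ depends only on the homotopy class of $f$, I may perturb $f$ to a smooth $\tilde f$ agreeing with $f$ outside a small neighborhood of $\Sigma$ and satisfying: (i) $\tilde f$ is transverse to $D_1 U(n) \setminus D_2 U(n)$, and (ii) $\tilde f$ avoids $D_2 U(n)$ entirely. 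Condition (ii) holds for a generic perturbation because a $1$-dimensional curve misses smooth subsets of codimension $\ge 4$.

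Step 2 (spectral reduction near $\zeta$). The stratifold orientation determining $\mu$ makes the pairing $\mu \cdot \tilde f_\ast[S^1]$ computable as the signed count of transverse intersections of $\tilde f(S^1)$ with the top stratum $D_1 U(n) \setminus D_2 U(n)$. These intersection points all lie in arbitrarily small arcs around the original points of $\Sigma$. Near each $\zeta \in \Sigma$, holomorphic functional calculus applied to a small circle in $\mathbb C$ enclosing $+1$ produces a smooth family of spectral projectors $P(z)$ of constant rank $k$, where $k$ is the multiplicity of $+1$ as an eigenvalue of $f(\zeta)$. Restricting $f(z)$ to $\mathrm{Im}\, P(z)$ and trivializing the resulting bundle gives a smooth local family $g(z) \in U(k)$ whose eigenvalues are precisely the eigenvalues of $f(z)$ lying near $+1$; the same perturbation passes to a map $\tilde g$ whose transverse crossings of $D_1 U(k)$ correspond bijectively to the transverse intersections of $\tilde f$ with the top stratum of $D_1 U(n)$.

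Step 3 (identifying the local index). Under the spectral reduction, each transverse intersection of $\tilde f$ with the top stratum of $U(n)$ corresponds to an eigenvalue of $\tilde g(z)$ crossing $+1 \in U(1)$. Each crossing contributes $\pm 1$ according to whether the eigenvalue passes counterclockwise or clockwise through $+1$, and the signed sum over an arc equals the net count of eigenvalues of $f$ crossing $+1$ as $z$ traverses that arc counterclockwise, which is the definition of $\iota_\zeta$. Summing over $\zeta \in \Sigma$ produces $\alpha_f = \sum_\zeta \iota_\zeta$.

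The principal obstacle is sign compatibility: I must check that the orientation of $D_1 U(n) \setminus D_2 U(n)$ induced by the stratifold orientation producing $\mu$ agrees with the orientation used to define $\iota_\zeta$, namely counterclockwise motion of eigenvalues through $+1 \in U(1)$. I would verify this first in the base case $n = 1$, where $D_1 U(1) = \{+1\}$ and the degree-one map $f(z) = z$ forces $\alpha_f = 1 = \iota_1$; then transport the convention to general $n$ via the block-diagonal inclusion $U(1) \times U(n-1) \hookrightarrow U(n)$, whose image meets the top stratum transversely and whose normal bundle carries a canonical orientation inherited from the $U(1)$-factor, so that the local model reduction in Step 2 is orientation-preserving.
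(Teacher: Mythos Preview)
Your argument is essentially correct and takes a genuinely different route from the paper.

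The paper never perturbs $f$. Instead it works directly with the given map and computes the local homology $H_{n^2-1}(D_1U(n)\,|\,g)$ at each intersection point: using an $\epsilon$-ball in $U(n)$ and Alexander duality inside its boundary sphere, it shows this group is free of rank $m_p$, with a basis indexed by the possible values $0,1,\dots,m_p$ of the function ``number of nearby eigenvalues with positive imaginary part.'' A commutative diagram then factors the Kronecker pairing $\langle x, b_\ast(j\times f)_\ast(\mu\times[S^1])\rangle$ through $\bigoplus_p H_{n^2}(D_1U(n)\times S^1\,|\,p)$, and the local contribution is read off as a linking number, which is seen to equal $\iota_p$. The orientation bookkeeping is absorbed into the Alexander duality/linking argument rather than checked by hand.

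Your approach trades the local-homology computation for a transversality argument: because $D_kU(n)$ has codimension $k^2\ge 4$ for $k\ge 2$, a generic perturbation supported near $\Sigma$ meets only the smooth top stratum, and $\alpha_f$ becomes a classical signed count. The spectral projector in Step~2 then packages all crossings near a given $\zeta$ into a $U(k)$-valued curve, and the net signed count is $\iota_p^+-\iota_p^-$ because the perturbation does not move the endpoints of the arc. This is shorter and more geometric; the paper's approach has the advantage of never leaving the original $f$ and of making the role of the multiplicity $m_p$ structurally visible in the local homology (which the paper also needs for its applications). Your sign check in the last paragraph is the one place where real care is needed; the reduction to $n=1$ via a block-diagonal model is the right idea, but you should state explicitly that the generator $\mu$ is the one coming from the top cell of the Steenrod CW structure and verify that under the block inclusion the induced coorientation of the top stratum matches the ``counterclockwise through $+1$'' convention.
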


In summary, Theorem \ref{bigthm:indexthm} provides a
description of the homological intersection number as a sum over local indices
in the particular case of $f\: S^1 \to U(n)$. The extent to which
a result of this kind applies in other instances remains
open.

\begin{figure}
  \begin{center}
    \includegraphics[width=0.40\textwidth]{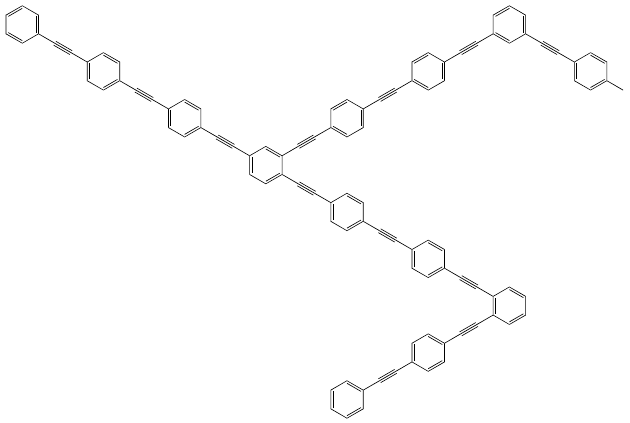}
    \includegraphics[width=0.40\textwidth]{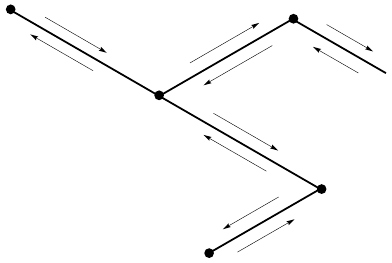}
  \end{center}
  \caption{Left: an example of the actual molecular geometry. Right:
  the metric graph on which it is based. The length of an edge
  is given by the number of repeat units it contains.}
 \label{side-by-side}
\end{figure}

\subsection{Exciton scattering} The molecules we investigate possess discrete,
1-dimensional translational symmetry which is broken only in a finite number
of places. The molecules are mathematically represented by
 simple metric graphs whose edge lengths
 are positive integers  (cf.\ Figure~\ref{side-by-side}). 

Let $Y$ be a connected, finite, simple graph. 
Let $Y_0$ denote the  set of vertices and $Y_1$ the  set of edges.
The graph $Y$ models the geometry of our molecule. Since $Y$ is simple, there is
at most one edge for each  pair of distinct vertices.
If $a,b\in Y_0$ are distinct vertices joined by an edge, 
we refer to the edge by $\{a,b\}$.

Associated with $Y$ is a directed graph $X$ called the {\it double}
of $Y$, whose set of vertices is $Y_0$ and where a directed edge
is given by a pair $(a,e) \in Y_0 \times Y_1$ such that $a$ is
incident to $e$. Each directed edge is uniquely 
represented by an ordered pair of distinct 
vertices $(a,b)$ and we
often specify $e$ as $ab$ (the arrows of Figure~\ref{side-by-side}
depict the directed edges of $X$). 

The lengths of the edges are given by a function 
\[
L \: Y_1 \to \{1,2,3,\dots\}\, .
\]
By slight abuse of notation, consider this as a function defined
on $X_1$ defined by $ab \mapsto L_{ab}$, with $L_{ba} = L(\{a,b\}) = L_{ab}$. 
From the viewpoint of chemical physics, the value $L_{ab}$ is the number of repeat units
of the edge $ab$ and corresponds to the number of periods of the discrete
symmetry on the edge.

If $a \in X_0 = Y_0$ is a vertex, let $X_1^{a,-}$ denote the set of incoming edges
at $a$, and similarly let $X_1^{a,+}$ be the set of outgoing edges at $a$.
There are canonical bijections $X_1^{a,\pm} \cong Y_1^a$, where $Y_1^a$ 
is the set of edges of $Y_1$ that meet $a$. The
bijection $X_1^{a,+} \cong Y_1^a$ is given by 
by $ab \mapsto \{a,b\}$ and the bijection  $X_1^{a,-} \cong Y_1^a$
is given by $ba\mapsto \{a,b\}$.


 

\begin{notation}
For a finite set $T$,
 let $\mathbb C[T]$ be the complex vector space with basis $T$. Let
 \[
 U(T)
 \] 
denote the unitary group of $\mathbb C[T]$ consisting of $\Bbb C$-linear transformations
$A\:\mathbb C[T]\to \mathbb C[T]$, such such that $AA^\ast = I$, where $A^\ast$
is the conjugate transpose of $A$.
 \end{notation}

The vertices of $X$ break the discrete symmetry, and hence, cause
the excitons to scatter (vertices are often referred
to simply as scattering centers for this reason). Associated to each vertex 
$a \in X_0$, and each real number $k\in\Bbb R$, there is  a 
{\it scattering matrix} 
\[
\tilde \Gamma^a(k)\: \mathbb C[ X_1^{a,-}] \to \mathbb C[X_1^{a,+}]\, 
\]
that describes the outgoing waves in terms of the incoming ones.\footnote{The real number $k$ is known as {\it quasimomentum}; it appears as a consequence of translational invariance.}

The scattering matrix is invertible and
the function $k \mapsto \tilde \Gamma^{a}(k)$
 is $(2\pi)$-periodic. In view of the canonical bijections $X_1^{a,\pm} \cong Y_1^a$,
the scattering matrix at $k \in \Bbb R$ can equally be regarded an invertible
self-map
\begin{equation} \label{eqn:gamma-a-k}
\tilde \Gamma^a(k)\: \mathbb C[ Y_1^{a}] \to \mathbb C[ Y_1^{a}]\, .
\end{equation}


\begin{hypo} \label{hypo:analytic} 
  For each $a\in X_0$ and $k \in \Bbb R$, the operator of
\eqref{eqn:gamma-a-k} is
unitary. Furthermore, 
the map $\tilde \Gamma^a\: \R \to U(Y_1^a)$ given by $k \mapsto \tilde \Gamma^a(k)$
is real analytic. 
In particular, the map
\[
 \Bbb R \times \Bbb C[ Y_1^{a}] \to \Bbb C[ Y_1^{a}]\, ,
\]
given by $(k,\psi) \mapsto \tilde \Gamma^a(k)\psi$, is  real analytic.
\end{hypo}

\noindent The hypothesis, which we take as an axiom, is
suggested by quantum mechanics~\cite{LandauLifshitz:QM}. It has been
substantiated in a variety of cases. 
Periodicity implies that $\tilde \Gamma^a$  factors as a map through the
circle. I.e.,
\[
\Gamma^a(e^{ik}) := \tilde \Gamma^a(k)  \, ,
\]
where $\Gamma^a\: S^1 \to U(Y_1^{a})$ is also real analytic.

Excitons  satisfy {\it time-reversal
symmetry (Kramers' symmetry)}:
\begin{equation} \label{eqn:time_reversal}
\tilde \Gamma^a(k)^{\ast} = \tilde \Gamma^a(-k)\, .
\end{equation}
In terms of $\Gamma^a$, this equation translates to 
$(\Gamma^a)^{\ast}(z) = \Gamma^a(\bar z)$, which means that
$\Gamma^a$ is $\Bbb Z/2\Bbb Z$-equivariant with respect to conjugation
on the circle and inversion on $U(Y_1^a)$. 
  
The scattering matrices are computed using quantum chemistry methods, and we
treat these as known. 

\subsection{The Exciton Scattering Equations}
For a vector
$\psi \in \mathbb{C}[X_1]$ 
and a directed edge $ab \in X_1$, let
$\psi_{ab}\in \mathbb C$ be the component of $\psi$ along $ab$. 
Hence,
$\psi = \sum \psi_{ab}ab$.  


Exciton scattering theory implies 
the following two equations,  which are taken here as axioms \cite[eqns.~(3),(4)]{Wu:ESI}:
\begin{equation} 
  \label{eq:ESold}
\begin{aligned}
  \psi_{ba} &= e^{ikL_{ab}} \psi_{ab}\, ,\\
  \psi_{ba} & = \sum_{ac \in X_1} 
     \Gamma^a_{ba,ac}(e^{ik}) \psi_{ac}\, ,
\end{aligned}
\end{equation}
where in the indexing for the sum the vertex $a$ is fixed. 
There is such a pair of equations for every edge in $X$.  
The entire system is referred to as the {\it ES equations.}

The matrix entries  $\Gamma^{a}_{ba,ac}$ can be
interpreted as the amplitude of the wave function $\psi$ on edge $ba$ as the result of
scattering a Gaussian plane wave from edge $ac$ onto vertex $a$~\cite{LandauLifshitz:QM}.

\begin{rem}
The first equation in ~(\ref{eq:ESold}) expresses the fact that when an
exciton wave propagates along a linear segment, it acquires a phase change. The
phase change depends on both the length of the segment and 
$e^{ik}$.  The second equation in  ~(\ref{eq:ESold}) connects the amplitude of
outgoing waves to those of the incoming.  
\end{rem} 

\begin{rem} The ES equations
of \cite[eqns.~(3),(4)]{Wu:ESI} are written in a slightly different notation
than the equations of \eqref{eq:ESold}. The notation in the former case
is intended for an audience of chemists and physicists, whereas \eqref{eq:ESold}
 is geared toward mathematicians.
\end{rem}

To bring algebraic topology into the picture, the equations 
need to be put into a compact form.
Let 
$\tilde \Gamma_0$ be the composition
\[
\Bbb R @> \Delta >> \prod_{a\in X_0} \Bbb R @>\prod_a \tilde \Gamma^a >>  \prod_{a\in X_0}  U(Y_1^a) @>\oplus_a >> U(X_1)\, ,
\]
where $\Delta$ is the diagonal and  the 
map $\oplus_a$ is defined by block sum of matrices (here, we used
 the canonical bijection $X_1 \cong \amalg_a Y_1^a$).

Choose once and for all a linear ordering on $Y_0$.
Then the left-lexicographical ordering on ordered pairs
identifies $X_1$ with the standard ordered basis for $\Bbb C^n$ where
$n = |X_1|$. In this way, $\tilde \Gamma_0$ becomes a $(2\pi)$-periodic real analytic map
$\Bbb R \to U(n)$.

Passing to the circle, one obtains a map $\Gamma_0\: S^1 \to U(n)$ defined by
\[
\Gamma_0(z) := \tilde \Gamma_0(k)\, , \quad \text{for }  z = e^{ik}\, .
\]
Let
\[
\hat{L}: \mathbb C[X_1] \ra \mathbb C[X_1] 
\]
be the length rescaling operator given by 
 $\hat L(ab) = L_{ab}ab$. 
Finally, define a map  $\Gamma\: S^1 \to  U(n)$ by 
\[
 \Gamma(z) := e^{ik\hat{L}}\cdot \Gamma_0(e^{ik})\, , \quad z = e^{ik} \, . 
\]
One verifies by  direct calculation that the
 full set of exciton scattering equations is equivalent to
the single parametrized eigenvector equation
\begin{equation}\label{eq:ES-circle}
 \Gamma(z) \psi = \psi\, , \quad  z\in S^1\, ,
\end{equation}
where $\Gamma\: S^1 \to U(n)$ is real analytic,
and $0 \ne \psi \in \Bbb C^n$ is a vector.
Henceforth, we resort to this version of the ES equations throughout the paper.

\subsection{Enumeration of excitons}
The analyticity of $\Gamma$ implies that equation ~\eqref{eq:ES-circle} can 
hold only for finitely many points $z \in S^1$.  Specifically, let $S^{2n-1}
\subset \mathbb C^n$ be the unit sphere. By Hypothesis \ref{hypo:analytic}, the
map
\[
F\: S^1 \times S^{2n-1} \to \mathbb C^n
\]
given by $F(e^{ik},v) = \Gamma(e^{ik})v - v$ is analytic. By the principle of permanence and the compactness
of $S^1 \times S^{2n-1}$, the set of zeros $F^{-1}(0) \subset S^1 \times S^{2n-1}$ is  finite. The image of the first
factor projection $p_1\: F^{-1}(0)\to S^1$ is then a finite set of points
\[
e^{ik_1},e^{ik_1},\dots,e^{ik_\ell}\in S^1\, ,\quad  0 \le k_1 < k_2 < \cdots < k_\ell < 2\pi\, .
\]
This is just the set of points  $z\in S^1$ where $\Gamma(z)\psi = \psi$ has a non-trivial solution.
Let $z_j = e^{ik_j}$.

For $1\le j\le \ell$, let $m_j$ denote the dimension of the $(+1)$-eigenspace of 
$\Gamma(z_j)$.
Then $m_j$ is the number of linearly independent solutions of the ES
equations at $z_j$. Recall the problem is to count
the total number of 
excitons weighted with their multiplicities, i.e., $m = \sum m_j$. The following
result, which is an application of Theorem \ref{bigthm:indexthm},  provides a lower bound
for $m$.

\begin{bigthm}[Exciton Lower Bound]
  \label{bigthm:lower-bound}
  Let $m$ denote the total number of solutions of the ES equations \eqref{eq:ESold}, weighted
  with their multiplicities. Then
  \[ 
    m \,\,\geq \,\, \sum_{ab \in X_1} L_{ab} + \sum_{a \in X_0}w(\Gamma^a)\,,
  \] 
  where $\Gamma^a$ is the scattering matrix at the vertex $a$ and
  $w(\Gamma^a)$ is the degree of the map $\det \circ \, \Gamma^a : S^1 \ra U(1)$.
\end{bigthm}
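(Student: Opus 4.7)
The plan is to combine Theorem A (the Index Theorem) with a direct homotopy-invariant computation of $\alpha_\Gamma$. Apply the index theorem to $f = \Gamma$: the preimage $\Sigma = \Gamma^{-1}(D_1U(n)) = \{z_1,\dots,z_\ell\}$ is the finite set of excitons, and the theorem gives
\[
\alpha_\Gamma = \sum_{j=1}^\ell \iota_{z_j}.
\]
Next, observe that only eigenvalues of $\Gamma(z_j)$ that actually equal $+1$ at $z_j$ can contribute to the signed crossing count defining $\iota_{z_j}$, and each such eigenvalue contributes at most $\pm 1$. Hence $|\iota_{z_j}| \le m_j$, where $m_j = \dim\ker(\Gamma(z_j)-I)$, and in particular $m_j \ge \iota_{z_j}$. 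Summing,
\[
m \;=\; \sum_j m_j \;\ge\; \sum_j \iota_{z_j} \;=\; \alpha_\Gamma.
\]

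The second step is to identify $\alpha_\Gamma$ with the winding number of $\det\circ\Gamma$. Since $\alpha_f = \mu\cdot f_\ast[S^1]$ depends only on the free homotopy class of $f$ in $[S^1, U(n)] \cong \pi_1(U(n)) \cong \Z$, and since both $f \mapsto \alpha_f$ and $f \mapsto w(\det\circ f)$ are group homomorphisms on this $\Z$ (the pointwise product on $U(n)$ induces the group law on $\pi_1$), it is enough to check agreement on one generator. Take $\gamma(z) = \text{diag}(z,-1,\dots,-1)$, for which $w(\det\circ\gamma)=1$; the preimage $\Sigma$ is the single point $z=1$, at which the eigenvalue $z$ passes through $+1$ counterclockwise while all other eigenvalues stay at $-1$. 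Thus $\iota_1 = 1$, and Theorem A gives $\alpha_\gamma = 1$. Therefore $\alpha_f = w(\det\circ f)$ for every smooth $f\colon S^1\to U(n)$ whose preimage of $D_1U(n)$ is discrete, and in particular $\alpha_\Gamma = w(\det\circ \Gamma)$.

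The final step is a direct calculation using the factorization $\Gamma(e^{ik}) = e^{ik\hat L}\cdot \Gamma_0(e^{ik})$. Because $\det\colon U(n)\to U(1)$ is a homomorphism and winding numbers of circle-valued maps are additive under pointwise product,
\[
w(\det\circ\Gamma) \;=\; w\bigl(\det\circ e^{ik\hat L}\bigr) + w(\det\circ\Gamma_0).
\]
The first summand equals the winding number of $k\mapsto e^{ik\Tr\hat L}$, which is $\Tr\hat L = \sum_{ab\in X_1} L_{ab}$. For the second summand, $\Gamma_0 = \bigoplus_a \Gamma^a$ implies $\det\circ\Gamma_0 = \prod_a \det\circ \Gamma^a$, whose winding number is $\sum_a w(\Gamma^a)$. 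Combining with Step~1,
\[
m \;\ge\; \alpha_\Gamma \;=\; \sum_{ab\in X_1} L_{ab} + \sum_{a\in X_0} w(\Gamma^a),
\]
which is the asserted inequality.

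The main obstacle is the identification $\alpha_\Gamma = w(\det\circ\Gamma)$ in Step~2: it hinges on knowing that the preferred stratifold generator $\mu$, when pushed into $H_{n^2-1}(U(n);\Z)$, is Poincar\'e dual to the generator of $H^1(U(n);\Z)$ detected by $\det$ (with the correct sign). The normalization computation with $\gamma$ pins down both the coincidence and the sign in one stroke, and once this is in hand the rest of the argument is bookkeeping.
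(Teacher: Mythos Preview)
Your proof is correct and follows the same overall architecture as the paper: use Theorem~A together with the inequality $m_p\ge \iota_p$ to obtain $m\ge \alpha_\Gamma$, then identify $\alpha_\Gamma$ with the degree of $\det\circ\Gamma$ and compute the latter from the factorization $\Gamma(e^{ik}) = e^{ik\hat L}\Gamma_0(e^{ik})$.

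The one genuine difference is in how you establish $\alpha_f = \deg(\det\circ f)$. The paper obtains this identification directly from the setup in \S\ref{sec:indexthm}: by construction the generator $j_\ast(\mu)\in H_{n^2-1}(U(n))$ is Poincar\'e dual to the class in $H^1(U(n))$ pulled back along $\det$, so the intersection pairing $j_\ast(\mu)\cdot f_\ast[S^1]$ is literally $\deg(\det\circ f)$; this is the content of Proposition~\ref{prop:glwind}. You instead argue that $f\mapsto \alpha_f$ and $f\mapsto \deg(\det\circ f)$ are both homomorphisms $\pi_1(U(n))\cong\Z\to\Z$, and then normalize by evaluating both on the explicit loop $\gamma(z)=\mathrm{diag}(z,-1,\dots,-1)$, invoking Theorem~A a second time to compute $\alpha_\gamma=1$. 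Your route is perfectly valid and has the virtue of pinning down the sign by a concrete local computation rather than by chasing orientation conventions through Poincar\'e duality; the paper's route is shorter because the duality statement has already been recorded. Either way the remaining bookkeeping (Steps~1 and~3) is identical to what the paper does.
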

 
There is evidence that the inequality 
of Theorem \ref{bigthm:lower-bound} is sharp. 
In fact, for molecules with large segment lengths, one has the following result.

\begin{bigadd}[The Long-Arm Limit]
  \label{bigthm:long-arm} Up to second order approximation 
  and for sufficiently large segment lengths, the bound of Theorem \ref{bigthm:lower-bound} 
  becomes an equality.
\end{bigadd}

\begin{rem} ``Up to second order approximation''
refers to exchanging the one-parameter family of operators $\Gamma(z)$ 
with its associated 1-jet at the solution set.
Empirical evidence from physics indicates that the addendum is 
valid even without resorting to the approximation.
\end{rem}

\subsection{Historical background} 
The theory of excitons was launched by Frenkel, Peierls, and Wannier in the
1930s  \cite{Frenkel, Peierls, Wannier}.  In its early years the
subject grew slowly and sporadically. The theory blossomed in the 1950s as a result of 
an increase in technology as well as 
the drive for a better understanding of the optical properties of solids  \cite{Overhauser}.  
For an early survey, see the introduction to \cite{Knox}.

The sort of excitations considered here are produced when an organic structure,
like chlorophyll, absorbs light of a certain wavelength.  The absorbed photon 
excites an electron into a higher energy state and the excited electron
can form a bound state with the ``hole'' it
leaves behind. The pair consisting of the excited electron and the hole is
known as an {\em exciton}. 
Finding a method to accurately enumerate excitons in an organic molecule 
has  consequences for engineering. For example, it plays an important role
in the design of more efficient organic solar cells.

The exciton scattering (ES) approach, introduced in \cite{Wu:ES} (see
also \cite{Wu:ESI}), was a dramatic improvement
over previous attempts to study electronic excitations in branched conjugated 
molecules for a number of reasons (for an extensive bibliography, 
see \cite{Wu:ESI, Wu:ES2, Wu:ES3}). The ES approach has 
shown itself to be more powerful than
traditional quantum chemistry methods,
such as Time Dependent-Density Functional Theory (TDDFT):\footnote{TDDFT and all
other quantum chemistry methods consist of  
approximation  schemes for solving the Schr\"odinger equation for a many-electron system.}
the computational time with the ES approach is on the order of seconds, whereas the
same computations with traditional methods are on the order of hours or days.

In earlier papers, the ES
approach was only applied to molecules with perfect symmetries, e.g. an `X' or
`Y' joint with the same number of repeat units on each arm~\cite{Li:symmetric}.
For example, the molecule displayed in Figure~\ref{side-by-side} fails to have
perfect symmetry and is therefore not capable of study using previous techniques.
The symmetric case was done by studying the equations ~\eqref{eq:ESold} using the
full symmetry group of the molecule (and the representation theory thereof).
Rewriting the ES equations in the form \eqref{eq:ES-circle} above
 allows for the analysis of
electronic excitations of general branched, conjugated molecules done in this
paper, as well as in \cite{Catanzaro:CountingExc}, regardless of symmetry.
Stated differently, this paper is the result of analyzing the ES equations for
generic molecules, in a mathematically rigorous way.

Theorem \ref{bigthm:lower-bound} exhibits an estimate for $m$, the number of solutions to the ES equations
weighted by their multiplicities. 
Chemical physicists are interested instead in
a number, denoted by $N$, which is the number 
  of excitations that lie in a fixed energy range, called the {\em
exciton band}.  The two numbers are related by the formula
\begin{equation} \label{eqn:formula}
  2N  \,\, = \,\, {m + d_0 + d_\pi}\, ,
  \end{equation}
where $d_k = d_k^+ - d_k^-$ and $d_k^\pm$ is the dimension of 
the $(\pm 1)$-eigenspace of  $\tilde\Gamma(k)$ for $k = 0,\pi$.
(cf.\ \cite[eqn.~(12)]{Catanzaro:CountingExc}). 
The formula \eqref{eqn:formula} is a reflection 
of the observation that the solutions of the ES equations
with $k \ne 0,\pi$ come in pairs, by time-reversal symmetry \eqref{eqn:time_reversal}. 
The appearance of the terms $d_k$ account for the identity
 $\tilde \Gamma(k)^2 = I$, when $k = 0, \pi$, which is again a consequence of time-reversal symmetry.
The fundamental domain $[-\pi,\pi]$ is known as the {\it Brillouin zone.}

\subsection{Remark on the exposition}
We have written the paper so that it is accessible to readers
with a modest background in algebraic and differential topology.  
The material from chemical physics---especially the technical jargon---has been kept to a bare minimum and is usually relegated to remarks or footnotes in an effort to make the paper accessible to a broad mathematical audience. 

\begin{out}
The CW decomposition of $U(n)$ is
described in \S\ref{subsec:steenrod}. The technical results required for the proof
of Theorem \ref{bigthm:indexthm} appear in \S\ref{sec:indexthm}. 
 An exposition of the proof
Theorem \ref{bigthm:indexthm} appears in \S\ref{sec:proof-index-thm}. 
The proof of Theorem \ref{bigthm:lower-bound} 
appears in \S\ref{sec:lower-bound} and the proof of Addendum 
\ref{bigthm:long-arm} appears in \S\ref{sec:long-arm}.
\end{out}

\begin{acks} 
  The authors thank Tian Shi for helpful discussions and
  for assistance with the figures.  The authors also thank
  a diligent referee whose numerous suggestions greatly improved
  the readability of the paper.    
  The third author is indebted to Greg Arone
  for explaining to him the argument in the general case that appears in the proof of Proposition~\ref{prop:mult_in_hom}.
    
  This material is based upon work supported by the National
Science Foundation Grant  CHE-1111350
and the Simons Foundation Collaboration Grant  317496.
\end{acks}



\section{The stratification of $U(n)$} \label{subsec:steenrod}

We describe the CW decomposition
of $U(n)$ appearing in Steenrod's book \cite{Steenrod:CohomOps}. 
Steenrod credits the CW structure to C.~Miller \cite{C-Miller} and Yokota \cite{Yokota}. We then relate the CW structure to the multiplicity stratification.

\subsection{The CW structure}
Following~\cite[ch.~IV]{Steenrod:CohomOps}, let $E^{2(n-1)} \subset S^{2n-1} \subset \mathbb C^n$ be the subspace
consisting of vectors whose final coordinates are real and nonnegative, where
we are using the standard ordered basis on $\mathbb{C}^n$.
The circle $S^1\subset \Bbb C$ acts freely on $S^{2n-1}$ by $\nu\cdot (z_1,\dots,z_n) = 
(\nu z_1,\dots,\nu z_n)$.
  Let $Q_{n}$
be the quotient space of $S^1 \times S^{2n-1}$, where $(\lambda, x) \sim
(\lambda,\nu\cdot x)$ for $\nu \in S^1$, and $(1,x) \sim (1,y)$ for all
$x,y\in S^{2n-1}$. The space $Q_n$ is homeomorphic to the reduced suspension
$\Sigma (\CP^{n-1}_+)$, where $\CP^{n-1}$ is the 
projective space of $\Bbb C^n$ and  $X_+$ denotes the effect of 
adding a disjoint basepoint to a space $X$. A point of $Q_n$ will be written
as an equivalence class $[\lambda,v]$, where $\lambda \in S^1\subset \Bbb C$ and $v \in S^{2n-1}\subset \Bbb C^n$.

For each $j \le n$, 
there is a natural map 
\[
  E^{2j-1} := E^{2(j-1)} \times [0,1] \ra S^{2j-1} \times S^1 \ra Q_j \, ,
\]
where the second displayed map is the projection onto orbits,
and the first is the product of the inclusion $E^{2(j-1)} \subset S^{2j-1}$
with the
map $[0,1] \ra S^1$ given by $t\mapsto e^{2\pi it}$.
The displayed map defines a relative homeomorphism 
\begin{equation} \label{eqn:char}
(E^{2j-1},S^{2j-2}) \to  (Q_j, Q_{j-1})\, .
\end{equation}
The maps \eqref{eqn:char} define
a CW decomposition of $Q_n$ which is just the suspension of the 
usual CW structure of $\CP^{n-1}_+$.

An embedding of $Q_n$ into $U(n)$ is given by sending an 
equivalence class $[\lambda,v] \in Q_n$ to the transformation which fixes the 
orthogonal compliment of the line $\Bbb Cv \in \mathbb C^n$ and multiplies any point on
 $\Bbb Cv$ by $\lambda$ (by convention, $Q_0$ maps to the identity transformation).  
This gives rise to embeddings $Q_j \ra U(n)$ for $j \leq n$ using the
inclusion $U(j) \ra U(n)$, given by including the first $j$ coordinates.
Multiplying the cells leads to {\em normal cells}: let $\bar n$ be
the ordered set  $\{1 < 2 < \cdots < n\}$. Then for  
any ordered subset 
\[
  I = \{i_1 < i_2 < \cdots < i_r\} \subset \bar n \, ,
\]
there is a (characteristic) map
\begin{equation} \label{eqn:normal-cells}
  E^I := E^{2i_1-1} \times E^{2i_2 -1} \times \cdots \times E^{2i_r-1} \to
  Q_{i_1} \times Q_{i_2} \times \cdots \times Q_{i_r} \to U(n)
\end{equation}
where the first map of \eqref{eqn:normal-cells} is the product of the 
 maps \eqref{eqn:char} and 
the second map is given by multiplication. The normal cells (together with 
the 0-cell corresponding to the identity) determine a CW decomposition
of $U(n)$~\cite[ch.~IV, thm.~2.1]{Steenrod:CohomOps}.

\begin{ex} \label{ex:U2} 
  Let $n = 2$. The above equips $U(2)$ with a CW structure with a single cell in dimensions $0,1,3$ and $4$.  
 The $3$-skeleton is $D_1U(2) = \Sigma (\CP^1_+)$ which may be thought of as the quotient space of $S^3$ obtained by identifying a single pair of antipodal points,
  as shown in Figure~\ref{fig:rendering} below.

There is another CW structure on $U(2)$ which uses the well-known homeomorphism
$U(2) \cong S^1 \times S^3$. This CW structure  also has a single cell in
dimensions $0,1,3$ and $4$, and the $3$-skeleton in this case is $S^1 \vee
S^3$. Although $\Sigma (\CP^1_+)$ is homotopy equivalent to $S^1 \vee S^3$, the
two spaces are not homeomorphic. Hence the two CW structures are distinct
(in fact, $SU(2)$ fails to be a subcomplex of the
first CW structure).

Blurring the distinction 
between the two CW structures in this paper invariably leads 
to unrecoverable errors. 
In particular,  the proof of $n=2$ case of Proposition \ref{prop:mult_in_hom} 
goes wildly wrong were we to use $S^1 \vee S^3$ in place of $D_1U(2)$.
\end{ex}

\subsection{The multiplicity stratification} 
We now return to the filtration
\eqref{eqn:filtration-intro}.  By inspection, the image of the normal cell \eqref{eqn:normal-cells}
is contained in $D_{n-r}U(n)$. If we define
\[
D_{(k)}U(n) :=  D_{k}U(n) \setminus D_{k-1}U(n)\, ,
\]
then  there is a canonical bijection of sets
\begin{equation} \label{eqn:decomp}
D_{(k)}U(n)  \,\, \cong \,\, \coprod_{|I|=n-k} \mathring{E}^I\, ,
\end{equation}
where each summand on the right maps homeomorphically onto its image on the left.

However, the topology on the right which makes
the identification a homeomorphism isn't the disjoint union topology. 
In fact, \cite[th.~A]{H-Miller} shows that 
 $D_{(k)}U(n)$ is diffeomorphic to the total space
of a real vector bundle of rank $(n-k)^2$ over the  Grassmannian $G_k(\Bbb C^n)$ of complex $k$-planes in $\Bbb C^n$ (see \cite[th.~A]{H-Miller}). 
It follows that $D_{(k)}U(n)$ is a connected smooth manifold of dimension $n^2-k^2$. It is trivial to check that the frontier condition is satisfied,
so the multiplicity stratification defines a manifold stratified space.

In particular, \eqref{eqn:decomp} implies the following result.

\begin{lem} The subspace
 $D_1U(n)\subset U(n)$ coincides with $(n^2{-}1)$-skeleton of the CW decomposition
 of $U(n)$ defined by the normal cells, and 
\begin{equation} \label{eqn:top-cell}
E^3 \times E^5 \times \cdots \times E^{2n-1} \to D_1U(n)
\end{equation}
is the unique  normal cell of dimension $n^2-1$.
\end{lem}

  We conclude this section with some remarks.
  
\begin{rems} (1). The above CW decomposition of $U(n)$ has the property
that there are cells in each dimension
$k \le n^2$ except when $k =2,n^2-2$.
\medskip

\noindent (2). Even though $D_1U(n)$ is the $(n^2-1)$-skeleton, \eqref{eqn:decomp} shows that
the multiplicity stratification  isn't  skeletal.
\medskip

\noindent (3). For $I \subset \bar n$, the 
operation $I \mapsto \bar n \setminus I$ determines
an involution on the set of normal cells and gives rise to Poincar\'e duality 
on $U(n)$.
\end{rems}




\section{Intersection Theory in $U(n)$} \label{sec:indexthm} 
\subsection{The global index}
The CW decomposition of $U(n)$ given in \S\ref{subsec:steenrod}
has one top cell and its $(n^2-1)$-skeleton is $D_1U(n)$.
 The subcomplex $D_1U(n)$ also has one top cell given by  \eqref{eqn:top-cell}.
 The latter defines a generator $\mu \in H_{n^2-1}(D_1U(n))$
 (with integer coefficients being understood; the top cell of $D_1U(n)$
 defines a cycle since there are no cells in dimension $n^2-2$). 
 Alternatively, the generator
can be obtained by means of Poincar\'e duality: 
 the fundamental class $[U(n)] \in H_{n^2}(U(n))$ defined by 
 the top cell of $U(n)$ gives
an isomorphism
\[
H_{n^2-1}(D_1U(n)) @> j_\ast >\cong > H_{n^2-1}(U(n)) @<\cap [U(n)] <\cong < 
H^1(U(n)) \cong  \Z \, ,
\]
where the first displayed map is induced by the inclusion
 $j:D_1U(n) \ra U(n)$ and the second displayed map is
Poincar\'e duality. The generator of $H^1(U(n))$ comes from the determinant map
$\det\: U(n) \to S^1$. The map $j_\ast$ is an isomorphism by
the homology long exact sequence of the pair $(U(n),D_1U(n))$.
Let $[S^1] \in H_1(S^1)$ be the
fundamental class.

\begin{defn}
  The {\em global intersection index} of a map $f\: S^1 \to U(n)$ 
  is the homological intersection number
  \[
     \alpha_f  =  j_*(\mu) \cdot f_*([S^1])   \in H_0(U(n);\Bbb Z) \cong  \Bbb Z.
  \]
\end{defn}

\begin{defn} The {\em intersection set} is the fiber product
  \[
    \mathfrak I_f :=  \{ (A,z) \in D_1U(n) \times S^1 |\, A = f(z)  \}.
  \]
\end{defn}

\begin{hypo} The set  $\mathfrak I_f$ is discrete, i.e., $\Sigma = f^{-1}(D_1U(n))$
is a finite set.
\end{hypo}

It will be convenient to express the global intersection index in terms of orientation
classes~\cite[p.~294]{Spanier:AlgTop}.

\begin{notation}  
For a subset $A \subset X$, let 
\[
(X\,|\, A) \, :=\,  (X,X\setminus A)\, .
\] 
Note the case $A = X$ 
is the pair $(X,\emptyset)$.
The case  $A = p$ is a point gives rise to 
the {\it local (singular) homology} at $p$, i.e.,  
 $H_\ast(X\,|\, p) := H_\ast(X,X\setminus p)$. \end{notation}

\begin{defn} Let $X$ be a smooth $m$-manifold and let $\Delta \subset X \times X$ be the diagonal. An {\em orientation} for $X$ is a (singular cohomology) class
  \[
x \in H^{m}\left( X\times X\, | \, \Delta \right)
\]
  such that for all $p \in X$, the class $i_p^*(x)$
  generates $H^{m}(X\, |\, p) \cong \Bbb Z$, where  $i_p$ is the inclusion
  \[
 (X \times p\, | \, p \times p) \subset (X\times X \, |\, \Delta ) \, .
    \]
\end{defn}

\begin{rem} The terminology is not universal. An orientation in the above 
sense is equivalent to
choosing a Thom class for the tangent bundle $TX$.
In \cite[p.~123]{Milnor-Stasheff}, $x$ is  called a 
{\it fundamental cohomology class}. 
\end{rem}

A choice of orientation class 
\[
x\in H^{n^2}(U(n)\times U(n) \, | \, \Delta)\, 
\]
is equivalent to a choice of fundamental class for $H_{n^2}(U(n))$. 
Henceforth, $x$ is chosen to correspond to the class $[U(n)]$ defined above.

Let $b\: (U(n) \times U(n)\, |\, U(n)\times U(n)) \to (U(n) \times U(n)\,  |\, \Delta)$ be the evident map.

\begin{lem}\label{lem:globalint}
  The global intersection index
 coincides with the Kronecker index, i.e., 
 $\alpha_f =  \langle x, b_\ast(j\times f)_\ast(\mu \times [S^1])\rangle$.
 \end{lem}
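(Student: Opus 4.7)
The plan is to interpret $x$ as the Thom class of the diagonal $\Delta \subset U(n) \times U(n)$ and then invoke the standard expression of the intersection pairing as a Kronecker pairing against this Thom class.

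First, I would observe that $x$ is the Thom class of the diagonal. The normal bundle of $\Delta \subset U(n)\times U(n)$ is canonically isomorphic to $TU(n)$, and the defining property of $x$---that $i_p^\ast(x)$ generates $H^{n^2}(U(n)\mid p)$ for every $p$---is precisely the condition that $x$ be a Thom class; the specific choice corresponding to $[U(n)]$ is the Thom class determined by the chosen orientation.

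Next, I would appeal to the following classical fact: for any closed oriented $m$-manifold $X$ with Thom class $x \in H^m(X\times X \mid \Delta)$, and for complementary-dimensional classes $\alpha \in H_p(X)$ and $\beta \in H_{m-p}(X)$,
\[
\alpha \cdot \beta \,=\, \langle x,\, b_\ast(\alpha \times \beta)\rangle,
\]
where $H_0(X)$ is identified with $\mathbb{Z}$ via augmentation. To see this, observe that the intersection pairing factors as the cross product, the inclusion $b_\ast$ into $H_m(X\times X \mid \Delta)$, and the Thom isomorphism $\tau(\eta) = q_\ast(x \cap \eta)$ (with $q$ either projection) into $H_0(X)$; the identity $\epsilon \circ \tau(\eta) = \langle x, \eta\rangle$ is then a routine manipulation of the adjunction between cap and Kronecker products. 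For a complete treatment I would cite \cite{Spanier:AlgTop}.

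To finish, I would substitute $\alpha = j_\ast(\mu)$ and $\beta = f_\ast([S^1])$ into the formula above and use naturality of the cross product to rewrite $j_\ast(\mu) \times f_\ast([S^1]) = (j\times f)_\ast(\mu\times [S^1])$, producing the claimed identity. The principal obstacle is the Thom-class formula for the intersection pairing invoked in the penultimate paragraph; however, this is classical and well-documented, so no novel argument is needed here.
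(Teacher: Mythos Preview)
Your proposal is correct and follows essentially the same route as the paper: the paper's proof simply cites the standard description of the intersection number as a slant product (referring to \cite[chap.~6]{Spanier:AlgTop} and \cite[\S4]{McCrory}), while you unpack that citation a bit by identifying $x$ as the Thom class of the diagonal and spelling out how the intersection pairing factors through the Thom isomorphism. Both arguments amount to invoking the same classical identity and then substituting $j_\ast(\mu)$ and $f_\ast([S^1])$.
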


\begin{proof} This follows immediately from the standard description of
the intersection number as a slant product 
(cf.\ ~\cite[chap.~6]{Spanier:AlgTop}, \cite[\S4]{McCrory}).
\end{proof}

\subsection{The local multiplicity} \label{subsec:multiplicity}
The local multiplicity, defined below, counts the dimension of the solution space
of the equation 
\begin{equation} \label{eqn:solutions}
f(z)\psi =\psi \, ,  \qquad \psi \in \Bbb C^n \, ,
\end{equation}
at a given $z\in S^1$.  

\begin{defn}\label{defn:multiplicity}
Let $p = (A,z) \in  \mathfrak I_f$ be any point.
  The {\em local multiplicity} of $p$, denoted $m_p$,
  is the dimension of the 
  (+1)-eigenspace of $f(z) = A $. In particular, $m_p \ge 1$.
  
  The {\it local multiplicity} of $f$ is the number
  \[
  m = \sum_{p} m_p\, ,
  \]
  where the sum is indexed over all points $p \in  \mathfrak I_f$.
\end{defn}

The local multiplicity of $f$ enumerates the entire set
of solutions to  equation \eqref{eqn:solutions}, where each solution $p$
is counted with multiplicity $m_p$.

\subsection{The local intersection index} \label{subsec:local}
 For $p = (A,z) \in  \mathfrak I_f $,
let $k_p \in [0,2\pi)$ be the unique point such that $z = e^{ik_p}$. 
By hypothesis, the 
non-trivial solutions to the equation $f(z)\psi = \psi$ occur at isolated 
points $z \in S^1$. 

Choose a small arc $J$ containing $+1\in S^1$ such that
the eigenvalues of $f(z)$ other than $+1$ lie outside $J$.
Consider an interval $I \subset \R$ centered at $k_p$. If $k \in I$, then the number of
eigenvalues of $f(e^{ik})$ which lie in $J$ is constant in $k$
provided that $I$ is chosen sufficiently small.  If $k \in I \setminus
\{k_p\}$ then the number of  eigenvalues of $f(e^{ik})$ lying in $J$ and having positive
imaginary part is a constant function on each component.  

\begin{defn} \label{defn:iota-pm} 
  With respect to the above assumptions,  
  let $\iota_p^-$ be the
number of eigenvalues lying in $J$ having positive imaginary part taken at $k < k_p$ with $k \in I$.
Similarly,  
let $\iota_p^+$ be the
number of eigenvalues lying in $J$ having positive imaginary part taken at some $k > k_p$ with $k \in I$.

The  {\it local intersection index} at $p \in \mathfrak I_{f}$ is
  the integer \[ \iota_p := \iota_p^+ - \iota_p^-\, .  \] The {\it local intersection
  index}  of $f\: S^1 \to U(n)$ 
  is the sum \[ q_f := \sum_{p \in \mathfrak I_{f}} \iota_p\, .  \]
\end{defn}
Intuitively,  $\iota_p$ is the number of eigenvalues passing through 
$1 \in U(1) \cong S^1$ counterclockwise, as $k$ passes through
$k_p$ from left to right. 

The definition trivially
implies the inequalities
\begin{align} \label{eqn:m>p} 
\begin{split}
m_p &\ge \iota_p\, ,\\
m  &\ge q_f\, 
\end{split}
\end{align}
(cf.\ Definition \ref{defn:multiplicity}).

The following result
 relates the local homology of a point
in $D_1U(n)$ to the rank of its $(+1)$-eigenspace. 

\begin{prop}\label{prop:mult_in_hom} Let $g\in D_1U(n)$ 
be a matrix whose $(+1)$-eigenvalue has multiplicity $j$. Then there is 
an isomorphism
\[
H_{n^2-1} (D_1 U(n) \, |\,  g ) \cong \Z^{j}\, .
\]
\end{prop}

\begin{proof}
Assume that $n \ge 2$, as the $n = 1$ case is trivial. We consider two cases.
\medskip

\noindent {\it Case 1:} $g = I$ is the identity. In this case we need to 
produce an isomorphism $H_{n^2-1} (D_1 U(n) \, |\,  I ) \cong \Z^{n}$.

Recall that the Lie algebra $\mathfrak u(n)$ is given by the skew
Hermitian matrices (i.e. $n \times n$ matrices $A$ satisfying $A^\ast = -A$;
recall that the eigenvalues of such an $A$ are pure imaginary).
  Fix $\epsilon > 0$ and take a standard neighborhood 
  $U_\epsilon := \{A \in \mathfrak
  u(n) \, |\,  ||A|| \le \epsilon \}$ using the operator norm. 
 Then $||A||$ coincides with $\max_d |d|$, where $di$ ranges over the set of eigenvalues of $A$.  Under the 
  exponential map, the elements of $U_\epsilon$ that map to $D_1U(n)$ are given by the set 
  \[
  V_\epsilon := U_\epsilon \cap \exp^{-1}(D_1U(n))
  \] 
which is just the set of singular skew Hermitian matrices having norm at most $\epsilon$. 
  As $\exp$ is a local diffeomorphism for $\epsilon$ sufficiently small, it follows that the local homology $D_1 U(n)$ at $g= I$
  coincides with the local homology of $V_\epsilon$ at the zero vector $0$. 
Hence, to complete Case 1, it is sufficient to identify the latter.
  
Observe that
  $V_\epsilon$ is contractible (a contraction is given by 
  the formula $(A,t) \mapsto (1-t)A$ for $t\in [0,1]$ and $A\in V_\epsilon$). Furthermore, $V_\epsilon \setminus 0$ admits a deformation retraction to
 $\partial V_\epsilon \subset \partial U_\epsilon \cong S^{n^2-1}$, where
 $\partial V_\epsilon$ is the set of matrices of $V_\epsilon$ 
 with all non-zero eigenvalues
of absolute value equal to $\epsilon$. To construct the deformation retraction,
move $A$ without changing its eigenspaces by deforming each non-zero 
eigenvalue $\lambda = di$
using the formula
\[
\lambda_t := \begin{cases} 
 (t\epsilon + (1-t)d)i \quad & \text{ if } d > 0\, ,\\
 (-t\epsilon + (1-t)d)i & \text{ if } d < 0\, ,
\end{cases}
\]
for $t\in [0,1]$.
The above shows that the local homology of $V_\epsilon$ coincides with 
$H_{n^2-2}(\partial V_\epsilon)$. Set $X := S^{n^2-1} \setminus \partial V_{\epsilon}$.

By Alexander duality, we infer
\begin{equation}\label{eqn:rank}
H_{n^2-2}(\partial V_\epsilon) \cong 
\tilde H^0(X)\, .
\end{equation}    
The  unreduced cohomology group $H^0(X)$ is free abelian 
  on the number of path components of $X$. 
  For $A\in X$, define
  $s_+(A)$ to be the number of eigenvalues $di$ of $A$ such that $d > 0$.
  We claim that two matrices $A$ and $B$ lie in the same component of $X$ 
  if and only
  if $s_+(A) = s_+(B)$. 
 The latter assertion follows from the fact that we can deform
  $A$ and $B$ to diagonal matrices inside $X$ without changing their eigenvalues (the deformation
  is given by conjugating with paths of unitary matrices to the identity).  This enables us to assume that $A$ and $B$ are diagonal.
However, diagonal matrices $A$ and $B$ lie in the same component of $X$
if and only if $s_+(A) = s_+(B)$.
It follows that the function $s_+$ induces a bijection
 \[
\pi_0(X) \cong \{0,1,2,\dots, n\}\, .
\]
Hence, $|\pi_0(X)| = n+1$, 
and we infer that the group \eqref{eqn:rank} is free abelian
 of rank $n$. This completes Case 1.
 \medskip
 
 \noindent {\it Case 2:} The general case. It is  enough the consider the case
 when $g$ is the matrix
\[
\begin{bmatrix}
I_j & 0 \\
0 & D 
\end{bmatrix}
\] 
where $I_j \in U(j)$ is the identity matrix and
 $D \in U(n-j)$ is a diagonal matrix such that no entry is equal to 1.
To justify this, note that for any $h \in D_1U(n)$ there is 
a $P\in U(n)$ such that $PhP^\ast = g$, where $g$ is as above (the number $j$
is the rank of the $(+1)$-eigenspace of $h$).
Then conjugation by $P$  transfers
 between the neighborhoods of $h$ and
 the neighborhoods of $g$.
 
To complete the proof of the proposition, 
it will suffice to construct  an isomorphism
\begin{equation}\label{eqn:reduction}
H_{n^2-1}(D_1U(n)|g) \cong H_{j^2-1}(D_1U(j)|I_j) \, , 
\end{equation}
since by Case (1), there is an isomorphism $H_{j^2-1}(D_1U(j)|I_j) \cong \Bbb Z^j$.

Since $U(j) \subset U(n)$ is a smooth submanifold of codimension $n^2-j^2$,
the point  $g \in U(n)$ has a  neighborhood $U$ diffeomorphic to
\[
E\times \Bbb R^{n^2-j^2} \, ,
\]
where $E \cong \Bbb R^{j^2}$ is a neighborhood of $I_j \in U(j)$. Choose the diffeomorphism $E\times \Bbb R^{n^2-j^2} \to U$ so that it restricts to the identity on $E\times 0$. In what follows, we use the diffeomorphism to identify $U$ with $E\times \Bbb R^{n^2-j^2}$.

 For $\epsilon > 0$,
let $E_\epsilon\subset E$ correspond to the ball of radius $\epsilon$ in $\Bbb R^{j^2}$. Let
$B_\epsilon \subset \Bbb R^{j^2}$ be ball of radius $\epsilon$ in 
$\Bbb R^{n^2-j^2}$. Set
\[
U_\epsilon := E_\epsilon \times B_\epsilon\, , \quad V_\epsilon := U_\epsilon \cap D_1U(n)\, .
\]
Let $h \in U_\epsilon$ be any element.  If $\epsilon$ is sufficiently small, then the eigenspaces of $h$ bifurcate into two types: 
those which  are close to $\Bbb C^j\times 0 \subset \Bbb C^n$ and those which are close to $0 \times \Bbb C^{n-j}$.  
Those of the former type have eigenvalues lying on a small open arc
containing $+1$ on the unit circle $S^1$, 
while those of the latter type lie on the complement of the arc. 
If $h$ also lies in $D_1U(n)$, then at least one of the eigenvalues of the first type must be equal to +1. In other words
\[
V_\epsilon  = (E_\epsilon \cap D_1U(j)) \times B_\epsilon \, .
\]
Apply the relative K\"unneth theorem to the above to obtain an isomorphism
\[
H_{n^2-1}(V_\epsilon|g) \cong 
H_{j^2-1}(E_\epsilon\cap D_1U(j)|I_j) \, .
\] 
By excision, the left hand side coincides with $H_{n^2-1}(D_1U(n)|g)$ and
the right hand side coincides with $H_{j^2-1}(D_1U(j)|I_j)$, establishing the
isomorphism \eqref{eqn:reduction}, and completing Case 2.
\end{proof}

\begin{rem} \label{rem:plausible} 
The proof of Proposition~\ref{prop:mult_in_hom} delivers a bit more: it shows
that the multiplicity stratification $\{D_jU(n)\}_{j\le n}$ is {\it locally cone-like} in sense that
any $g\in D_jU(n)$ possesses a neighborhood $U$ in $U(n)$ equipped with a homeomorphism
\[
U\cong CL \times \Bbb R^{n^2-j^2}\, ,
\]
where $L$ is a compact filtered space of dimension $j^2-1$,  $CL$ is the open cone on $L$ and the homeomorphism preserves filtrations \cite{Siebenmann}. In our case, $L$ is the link of the point $I_j \in U(j)$.  
\end{rem}

The following result plays a key role of the proof of Theorem 
\ref{bigthm:indexthm}.

\begin{cor}\label{cor:mult_in_hom} There is an isomorphism
  \[
    H_{n^2}(D_1 U(n) \times S^1 \, | \, p) \cong  \Z^{m_p}\, .
  \]
\end{cor}

\begin{proof} This follows immediately from Proposition \ref{prop:mult_in_hom}
and the relative K\"unneth theorem.
    \end{proof}

\begin{figure} 
  \begin{tikzpicture}
    \draw[fill=gray!25!white,thick] (2,2) circle (2cm);
    \draw[fill=white,thick] (3,2) circle (1cm);
    \node at (4,2) [circle,draw=black,fill=black,inner sep=0.5mm, label=right:{$p$}] {};
    \draw[thick, dashed, draw=gray!50!black] (1,2) ellipse (1cm and 0.5cm) ;
    \draw[thick, rotate around={45:(3.3,3.2)},dashed, draw=gray!50!black] (3.28,3.2) ellipse (0.25cm and 0.13cm) ;
    \draw[thick, rotate around={-45:(3.3,0.8)},dashed, draw=gray!50!black] (3.28,0.8) ellipse (0.25cm and 0.13cm) ;
  \end{tikzpicture}
\caption{ \label{fig:rendering} 
A rendering of $D_1U(2) = \Sigma (\CP^1_+)$ in one dimension less. 
The point $p$ corresponds
to the identity matrix. The two small cones
 emanating from  $p$ generate the local homology (cf.\ the proof of Proposition \ref{prop:mult_in_hom}).}
\end{figure}
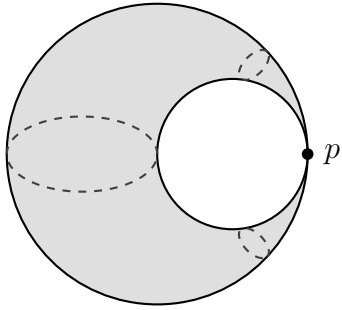

\begin{rem}\label{rem:add-to-prop} 
With respect to the isomorphism of 
Corollary \ref{cor:mult_in_hom} and the identification 
$H_{n^2}(D_1U(n) \times S^1)\cong \Z$, it is straightforward to check that
the evident homomorphism 
\[
H_{n^2}(D_1U(n) \times S^1) \to 
H_{n^2}(D_1U(n) \times S^1\, |\, p)
\]
 corresponds to the diagonal homomorphism $\Z \to \Z^{m_p}$.
\end{rem}

  

\begin{construct}
Suppose that 
\[
F\: (X,A) \to (Y,B)
\] is a map of pairs such that
$B \subset Y$ is a closed subspace and  $F^{-1}(B) = A \amalg A'$,
where $A'$ is disjoint from $A$. Suppose also that $X$ is a normal space. Then there is an
open neighborhood $U$ of $A$ which is disjoint from an open neighborhood of $A'$. It follows that $F$ determines
a map of pairs $(U\, |\, A) \to (Y\, |\, B)$. By excision, $H_\ast(U\, |\, A) \cong
H_\ast(X\, |\,A)$. Hence there is an induced homomorphism
 \[
 (F|A)_\ast\: H_\ast(X\, |\, A) \to 
H_\ast(Y\, |\, B)\, .
\]
\end{construct}

Let $p = (f(z), z) \in \mathfrak I_f$. Apply the construction to 
the map of pairs 
\[
j \times f\: (D_1U(n) \times S^1,p) \to
(U(n) \times U(n),\Delta)
\]
 to obtain a homomorphism 
\begin{equation} \label{eqn:j-times-gamma}
(j\times f|p)_*\:  H_{n^2}(D_1U(n) \times S^1 \, |\, p)  \ra 
    H_{n^2}(U(n) \times U(n) \,|\Delta)\, .
\end{equation}

Consider the composition
\begin{equation} \label{eqn:composition}
\xymatrix{
H_{n^2}(D_1U(n) \times S^1) \ar[r] & H_{n^2}(D_1U(n) \times S^1\, |\, p) 
\ar[d]^{(j\times f|p)_*} \\
& H_{n^2}(U(n) \times U(n) \,|\Delta)  \ar[r]^(.75){\langle x,{-}\rangle}_(.75)\cong
& \Bbb Z\, ,}
\end{equation}
where $x\in H^{n^2}(U(n) \times U(n) \,|\Delta)$ is the orientation class.
The unlabeled arrow is induced by the evident map 
$(D_1U(n) \times S^1\, |\, D_1U(n) \times S^1)\to (D_1U(n) \times S^1\, |\,	p)$.

\begin{prop}\label{prop:losum}  The homomorphism \eqref{eqn:composition}
maps the class $\mu \times [S^1]$ to
  the local intersection index $\iota_p$.
    \end{prop}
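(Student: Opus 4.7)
The plan is a three-step reduction, ending with a direct local degree computation for simple transverse crossings.

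\textbf{Step 1: basis reduction.} First I would invoke the K\"unneth formula for pairs to obtain the natural isomorphism
\[
H_{n^2}(D_1U(n) \times S^1 \,|\, p) \cong H_{n^2-1}(D_1U(n) \,|\, g) \otimes H_1(S^1 \,|\, z),
\]
where $p = (g,z)$. Proposition \ref{prop:mult_in_hom} supplies a preferred basis $\mu_1,\dots,\mu_{m_p}$ of the first factor, while the second factor has preferred generator $[\tau_z]$ coming from $[S^1]$. By Remark \ref{rem:add-to-prop}, the unlabeled horizontal arrow of \eqref{eqn:composition} sends $\mu \times [S^1] \mapsto \sum_{j=1}^{m_p} \mu_j \otimes [\tau_z]$. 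It therefore suffices to prove
\[
\sum_{j=1}^{m_p} \big\langle x,\, (j \times f \,|\, p)_*(\mu_j \otimes [\tau_z])\big\rangle \, = \, \iota_p.
\]

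\textbf{Step 2: transverse reduction.} Next I would pick a closed arc $I \subset S^1$ about $z$ meeting $\Sigma$ only at $z$. By stratified transversality applied to the $1$-manifold $I$, one can choose a small real-analytic perturbation $\tilde{f}$ of $f$ supported in the interior of $I$ so that $\tilde{f}$ is disjoint on $I$ from every stratum of $D_1U(n)$ except the open top cell $\mathring{Q_2 \times \cdots \times Q_n}$, which it meets transversally at finitely many simple points $p_\ell = (\tilde f(z_\ell), z_\ell)$ of multiplicity one. Since the global pairing $\langle x, (j \times f_t)_*(\mu \times [S^1])\rangle$ is homotopy invariant in $f_t$ and local contributions outside $I$ are unchanged during the homotopy $f \sim \tilde f$, excising the finite intersection sets in $D_1U(n) \times I$ and matching the contributions from inside $I$ gives
\[
\sum_{j=1}^{m_p} \big\langle x,\, (j \times f \,|\, p)_*(\mu_j \otimes [\tau_z])\big\rangle = \sum_{\ell} \big\langle x,\, (j \times \tilde f \,|\, p_\ell)_*(\mu^{\ell} \otimes [\tau_{z_\ell}])\big\rangle,
\]
where $\mu^\ell$ denotes the unique generator of $H_{n^2-1}(D_1U(n) \,|\, \tilde f(z_\ell)) \cong \Z$. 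On the eigenvalue side, $\iota_p^\pm$ depend only on the endpoint values of $\tilde f$ on $I$, and the count of upper-$J$ eigenvalues jumps by $\iota_{p_\ell}$ across each $z_\ell$, so $\iota_p = \sum_\ell \iota_{p_\ell}$.

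\textbf{Step 3: the transverse case.} It remains to verify $\langle x, (j \times \tilde f \,|\, p_\ell)_*(\mu^\ell \otimes [\tau_{z_\ell}])\rangle = \iota_{p_\ell} \in \{\pm 1\}$ at each simple transverse crossing $p_\ell$. To compute the Kronecker pairing locally I would apply the diffeomorphism $(A,B) \mapsto A B^{-1}$ of $U(n)\times U(n)$, which sends $\Delta$ to $\{I\}$ and carries $x$ to the standard orientation at $I$; the pairing then becomes the local degree at $I$ of the composite $(h, z') \mapsto h \tilde f(z')^{-1}$ on a neighborhood of $p_\ell$. In local coordinates adapted to the eigendecomposition of $\tilde f(z_\ell)$, this composite splits as the product of an orientation-preserving chart along the $(n^2-1)$ directions transverse to the $(+1)$-eigenline with the scalar map $S^1 \to U(1)$ recording the passage of the eigenvalue $\lambda(k)$ through $1$; the degree of the latter is $+1$ for a counterclockwise crossing and $-1$ for a clockwise one, matching $\iota_{p_\ell}$. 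The principal technical delicacy lies in the orientation bookkeeping required to confirm that the basis $\mu_j$, built via the Alexander duality in the proof of Proposition \ref{prop:mult_in_hom}, is compatible with the orientation class $x$ arising from $[U(n)]$.
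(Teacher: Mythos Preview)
Your approach is correct but follows a genuinely different route from the paper's. The paper never perturbs $f$; it works directly at the point $p$ of multiplicity $m_p$. Using the Alexander-duality basis $\{\mu_j\}$ built in the proof of Proposition~\ref{prop:mult_in_hom}, the paper reinterprets each term $\langle x,(j\times f|p)_*(\mu_j\times\mu_W)\rangle$ as a linking number in the small sphere $\partial U_\epsilon(g)$, and reads that linking number off from which of the components $X(g)_0,\dots,X(g)_{m_p}$ of $\partial U_\epsilon(g)\setminus \partial V_\epsilon(g)$ the two endpoints of the arc $f(W)$ land in. Since landing in $X(g)_j$ means precisely $j$ eigenvalues lie in the upper half of $J$, the sum collapses immediately to $\iota_p^+-\iota_p^-$; in particular at most two of the basis elements contribute.

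Your perturbation argument trades that linking picture for the more classical device of reducing to generic transverse intersections with the smooth codimension-one top stratum. This has the advantage of isolating the orientation check (which you rightly flag as the delicate point) to the multiplicity-one locus, where it becomes a standard local-degree computation; it also makes the additivity under deformation transparent. The cost is the extra stratified-transversality and homotopy-invariance/excision bookkeeping in Step~2. One point worth making explicit there: the strata of $D_1U(n)$ consisting of matrices with $(+1)$-eigenspace of dimension $\ge 2$ have codimension $\ge 4$ in $U(n)$, so a generic curve misses them---this is what justifies the reduction. (Real-analyticity of the perturbation is unnecessary; smooth suffices.)
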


\begin{proof} The unlabeled homomorphism appearing in the displayed composition 
\eqref{eqn:composition}
is identified with the diagonal $\Z \to \Z^{m_p}$ (cf.\ Remark
\ref{rem:add-to-prop}). We  need to  identify the homomorphism $(j\times
f|p)_\ast$ of \eqref{eqn:j-times-gamma}, which is of the form $\Z^{m_p}\to
\Bbb Z$.  If $u_i$ generates the $i$-th factor of $\Z^{m_p}$, $1\le i \le m_p$,
then the value of this homomorphism on $u_i$ is an integer $s_i$. It 
suffices to show  $\sum_i s_i = \iota_p$. An artifact of the argument given below is
that $s_i \ne 0$ for at most two of the indices. 

Using the proof of Proposition~\ref{prop:mult_in_hom},
we can find a compact neighborhood $U$
of $g$ in $U(n)$ which is diffeomorphic to $D^{n^2}$ in such a way that
$ V := U \cap D_1U(n)$ is a contractible neighborhood
of $g$ in $D_1U(n)$ and $ V \setminus g$ retracts onto $\partial V \subset \partial U \cong S^{n^2-1}$.

  For $z \in S^1$ with $f(z) = g$,
  define  $W \subset S^1$ to be the closed arc
 given by taking the connected component of the preimage $f^{-1}(U)$ 
 that contains
 $z$. 
 Then $f$ restricts to give a map of pairs $(W,\partial W)\to (U,\partial U)$. 
  
Consider
  the homomorphism 
    \begin{equation} \label{eqn:local-version}
 (j\times f|p)_\ast\:   H_{n^2}(V \times W\, | \, p) \to 
    H_{n^2}(U(n) \times U(n) \, | \,\Delta)
 \end{equation}
 Clearly, it will be enough to identify \eqref{eqn:local-version} since it is a localized form of \eqref{eqn:j-times-gamma}.
 
  If $w\: W \to S^1$ is the inclusion, then the pushforward of the fundamental
  class of the circle, i.e.,
  \[
  \mu_W := w_{!}([S^1]) \in H_1(W\, |\, z) \cong \Bbb Z\, ,
\]
is a generator. Here we  orient $W$ using the counter-clockwise orientation of $S^1$.
  
  By Corollary~\ref{cor:mult_in_hom}, the domain of
  \eqref{eqn:local-version} is isomorphic to $\Z^{m_p}$, where
  $m_p$ is the multiplicity of $f(z)$.  
  If $p = (g,z) \in D_1U(n) \times S^1$, then following the proof
   of Proposition~\ref{prop:mult_in_hom}, one has preferred
  generators $\{\mu_i\}_{i =1,\ldots,m_p}$ for 
  $H_{n^2-1}(V|g)$.  Then 
 \[
 \{\mu_i\times \mu_W\}_{i =1,\ldots,m_p}
 \]
 is a set of generators for $H_{n^2}(V\times W|p)$.
 
 Applying
  the orientation class to  the image of these generators yields the
  intersection product $j_*(\mu_t) \cdot f_*(\mu_W)$. To analyze this product,
  we make use of the fact that the homological intersection product is equivalent
  to a linking number~\cite[p.~361,~Ex.~2,~3]{Spanier:AlgTop}. 
  Therefore, $j_*(\mu_t) \cdot f_*(\mu_W)$ is non-zero only when the curve $f(W)$ 
  links with $j_*(\mu_t)$, which occurs when $f(W)$ has non-trivial intersection with the
  path component of $S^{n^2-1} \setminus \partial V$ corresponding to
  the class $\mu_t$ via Alexander duality. 
  
Concretely, let $A \subset S^1$ be the open arc given by
 $\{e^{id}| -\delta < d < \delta\}$, 
 where $\delta > 0$ is chosen in such a way that for every $h\in U$, 
precisely $m_p$ of the eigenvalues of $h$ lie in $A$. 
Let $A_+ \subset A$ be the sub-arc
given by $\{e^{id}| 0 < d < \delta \}$.

Label the components of $S^{n^2-1} \setminus \partial V$ by
  $X(g)_j$ for $0 \le j \le m_p$, where $X(g)_j$ is characterized by matrices
  having exactly $j$ eigenvalues lying in $A_+$. 
  Then the curve $f(W)$ will enter
  $S^{n^2-1} \setminus \partial V$ at a component $X(g)_{k}$ 
  and leave it at a component $X(g)_{\ell}$ (with $k$ possibly equal to $\ell$). It follows that the linking number
  is given by $\ell - k$ (cf.~Figure~\ref{fig:nbhd_U}), which is the same thing 
  as the local intersection index $\iota_p$.  
\end{proof}

\begin{figure}
  \begin{tikzpicture}
    \draw (2,2) circle (3cm);
\shade[top color=gray!50!white, bottom color=gray!50!white, draw=black] (2.52,4.95) .. controls (2,3.1) .. (2,2) .. controls (1.3,2.5) .. (.97,4.82) arc (110:80:3cm);
\shade[top color=gray!50!white, bottom color=gray!50!white, draw=black] (.732,-.72) .. controls (1,-.5) .. (2,2) .. controls (2.5,1.1) .. (2.78,-.898) arc (285:245:3cm);
\draw (5,2) .. controls (3,2.5) .. (2,2);
\draw (4.12, 4.12) .. controls (3,3) .. (-0.12, -0.12);
\draw (-.6,3.5) .. controls (0,2.5) .. (2,2);
\draw (-.75,.78) .. controls (0,2) .. (2,2);
\begin{scope}[very thick, red, dashed, decoration={
markings, mark=at position 0.5 with {\arrow{latex}}}]
\draw[postaction={decorate}]  (-1,2) .. controls (-.5, 2.3) .. (2,2);
\draw[postaction={decorate}] (2,2) .. controls (4,1) .. (3.93, -.3);
\end{scope}
\node at (-1,2) [circle,draw=red,fill=red,inner sep=1mm] {};
\node at (3.93,-.3) [circle,draw=red,fill=red,inner sep=1mm] {};
\node at (2,2) [circle,draw=black,fill=black,inner sep=0.7mm] {};
\node at (5,0) {$X(g)_{j_1}$};
\node at (5.5,3) {$X(g)_{j_2}$};
\node at (3.5,5) {$X(g)_{j_3}$};
\node at (0,4.9) {$X(g)_{j_4}$};
\node at (-1.8,2) {$X(g)_{j_5}$};
\node[label={[rotate=30]$\vdots$}] at (-1.3,0.4) {};
  \end{tikzpicture}
\caption{A picture of the neighborhoods $U$ and $V$ of $g \in D_1U(n)$
(cf.~ proof of Proposition \ref{prop:losum}).
The center point is $g$, the entire region is $U$, and the shaded portion is $V$. The arc $f(W)$
is shown in red, and the various components are labelled by $X(g)_j$ for
$0 \le j \le m_p$. In this example, the curve enters
the neighborhood at $X(g)_{j_5}$ and leaves it at
$X(g)_{j_1}$, and so the local intersection index
at $g$ is $j_1 - j_5$.}
\label{fig:nbhd_U}
\end{figure}
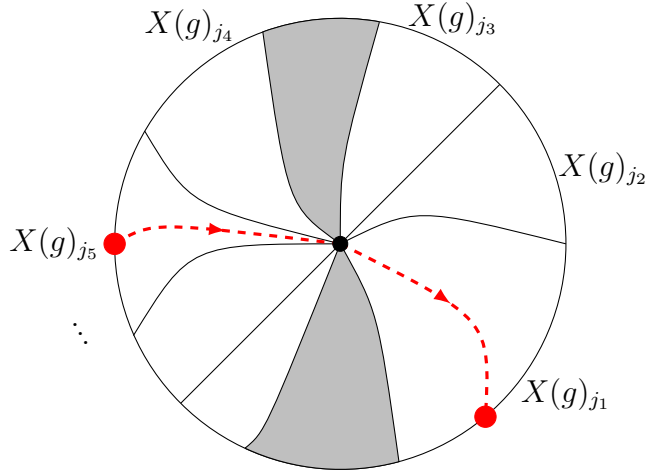

\section{The proof of Theorem \ref{bigthm:indexthm}}
\label{sec:proof-index-thm}

The proof of Theorem \ref{bigthm:indexthm}  relies on the commutative diagram
\begin{equation*}
  \xymatrixcolsep{3pc}\
  \xymatrix{
    H_{n^2}(D_1U(n) \times S^1) \ar@/_8.0pc/[dd]_{a_\ast}
    \ar[r]^{(j \times f)_\ast}
    \ar[d]
    &
   H_{n^2}(U(n) \times U(n))
    \ar[d]^{b_\ast}
    \\
    H_{n^2}(D_1 U(n) \times S^1\, |\, \mathfrak I_{f}) \ar[d]_\cong
    \ar[r]^{(j \times f|\mathfrak I_f)_\ast}
    &    H_{n^2}(U(n) \times U(n)\, |\, \Delta)
        \\
  \bigoplus_{p \in \mathfrak I_{f}} H_{n^2}(D_1U(n) \times S^1\, | \, p) \ar[ur]_{\oplus (j \times f|p)_\ast} \ar[ur] 
  }
\end{equation*}
where the $p$-th component of the arrow $a_\ast$ is induced by the evident map
\[
(D_1U(n) \times S^1|D_1U(n) \times S^1) \to (D_1U(n) \times S^1|p)\, .
\] 
and $b_\ast$ is as in Lemma \ref{lem:globalint}.

By Lemma~\ref{lem:globalint}, 
the composite $b_\ast\circ (j\times f)_\ast$ maps $\mu \times [S^1]$ to 
a class which, when paired with the orientation class, yields $\alpha_f$.
By Proposition \ref{prop:mult_in_hom}, Proposition ~\ref{prop:losum}, and Remark \ref{rem:add-to-prop},
the composite $\oplus (j \times f|p)_\ast\circ a_\ast$
maps  $\mu \times [S^1]$ to a class which pairs to  $q = \sum \iota_p$.
Invoking the commutativity of the diagram completes the proof.\qed

\section{The proof of Theorem \ref{bigthm:lower-bound}} 
\label{sec:lower-bound}
Recall the scattering matrix $\Gamma^a \: S^1 \to U(Y^a_1)$ defined
in \S\ref{sec:intro}.  

\begin{defn} The 
  {\em winding number} $w(\Gamma^{a})$ of a vertex $a \in Y_0$ is the 
  degree of the composition
\[
  S^1 @>\Gamma^{a}>> U(Y^a_1) @> \det >> U(1) \cong S^1\, .
\]
\end{defn}
The following proposition
identifies the global intersection index in terms of the winding
numbers and the length function $L$. 

\begin{prop}\label{prop:glwind} 
  \[
    \alpha_\Gamma =  \sum_{ab\in X_1} L_{ab} +
    \sum_{a \in X_0} w(\Gamma^{a})\,  .
  \] 
\end{prop}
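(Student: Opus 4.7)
The plan is to identify the global intersection index $\alpha_\Gamma$ with the degree of the composite $\det\circ\,\Gamma\colon S^1\to U(1)$, and then to compute that degree directly from the factorization of $\Gamma$ as the product of an ``exponential length'' factor and the block--diagonal scattering operator $\Gamma_0$.

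First I would reduce the problem to a degree calculation on $S^1$. According to the discussion opening \S\ref{sec:indexthm}, the class $j_*(\mu)\in H_{n^2-1}(U(n))$ is Poincar\'e dual to the generator $\det{}^*(\sigma)\in H^1(U(n))$, where $\sigma\in H^1(S^1)$ is the preferred generator. The standard identification of the intersection product with the Kronecker pairing, together with naturality of the pairing, yields
\[
\alpha_\Gamma \;=\; j_*(\mu)\cdot\Gamma_*([S^1]) \;=\; \bigl\langle \det{}^*(\sigma),\,\Gamma_*([S^1])\bigr\rangle \;=\; \deg(\det\circ\,\Gamma).
\]

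Next I would exploit the explicit factorization $\Gamma(e^{ik})=e^{ik\hat L}\cdot\Gamma_0(e^{ik})$ with $\Gamma_0=\bigoplus_{a\in X_0}\Gamma^a$. Multiplicativity of the determinant gives
\[
\det\bigl(\Gamma(e^{ik})\bigr) \;=\; e^{ik\operatorname{tr}(\hat L)}\cdot\prod_{a\in X_0}\det\bigl(\Gamma^a(e^{ik})\bigr).
\]
Since $\hat L$ is diagonal in the basis $X_1$ with diagonal entries $L_{ab}$, one has $\operatorname{tr}(\hat L)=\sum_{ab\in X_1}L_{ab}$. The first factor is therefore the power map $z\mapsto z^{N}$ with $N=\sum_{ab\in X_1}L_{ab}$, of degree $N$; the degree of the second factor equals $\sum_{a\in X_0}w(\Gamma^a)$ by the definition of the winding number together with additivity of degree under pointwise multiplication of circle-valued maps. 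Summing the two contributions produces the claimed identity.

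The argument is essentially a bookkeeping exercise, and I do not foresee a serious obstacle. The only mildly subtle point---the ``hard part,'' such as it is---is verifying that $\det{}^*(\sigma)$ corresponds to $j_*(\mu)$ under Poincar\'e duality; but this is precisely the content of the paragraph defining $\mu$ in \S\ref{sec:indexthm}, so the verification is immediate. Note in particular that the index theorem (Theorem~\ref{bigthm:indexthm}) is not required for this computation: Proposition~\ref{prop:glwind} is a statement purely about the global index, and the local indices enter only later, when this formula is combined with Theorem~\ref{bigthm:indexthm} to derive the lower bound.
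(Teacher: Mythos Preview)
Your proposal is correct and follows essentially the same route as the paper: reduce $\alpha_\Gamma$ to $\deg(\det\circ\Gamma)$ via the Poincar\'e duality identification of $j_*(\mu)$ with the generator coming from $\det$, then split the degree using the factorization $\Gamma(e^{ik})=e^{ik\hat L}\Gamma_0(e^{ik})$ and the multiplicativity of $\det$. The only cosmetic difference is that you package $\det e^{ik\hat L}$ as $e^{ik\operatorname{tr}\hat L}$ whereas the paper writes it as the product $\prod_{ab}e^{ikL_{ab}}$, which is of course the same thing.
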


\begin{proof} 
By Poincar\'e duality, the class 
$\Gamma_*([S^1]) \in H_1(U(n))$ is given by  $\alpha_\Gamma\iota$,
where $\iota$ is the generator which hits the fundamental class of $S^1$ under
$\text{\rm det}_\ast\: H_1(U(n)) \to H_1(S^1)$.  
The result now follows from the calculation
\[
\text{\rm det}_\ast\Gamma_\ast[S^1] = \text{\rm det}_\ast(\alpha_\Gamma\iota)
= \alpha_\Gamma\text{\rm det}_\ast(\iota) = \alpha_\Gamma [S^1]\, .
\]
The displayed formula
involves a computation of the determinant of  
$\Gamma$. For $z = e^{ik}\in S^1$, the homomorphism property of the determinant 
implies the equation
\begin{equation} \label{eqn:degree_calc}
\det \Gamma(z)  = 
\det e^{ik\hat L} \cdot  \det \Gamma_0(z)\, .
\end{equation}
The number $\alpha_\Gamma$  is therefore
 just the degree of the map $\det \Gamma\: S^1 \to S^1$, and is given by the sum of the degrees of the two maps appearing on the right side of \ eqn.~\eqref{eqn:degree_calc}.

Since $\hat L$ is a diagonal operator, the term $\det e^{ik\hat L}$ 
is given by $\prod_{ab}e^{ik L_{ab}}$. Its degree is therefore
the sum $\sum_{ab\in X_1} L_{ab}$. 
The determinant of $\Gamma_0(k)$ is the product of $\det w(\Gamma^a)$ indexed over the vertices of $X$, so its degree is  $ \sum_{a \in X_0} w(\Gamma^{a})$.
It follows that the degree of $\det\Gamma\: S^1 \to S^1$ is given by the expression
\[
 \sum_{ab\in X_1} L_{ab} +
  \sum_{a \in X_0} w(\Gamma^{a})\, .\qedhere
 \] 
\end{proof}

\begin{proof}[Proof of Theorem \ref{bigthm:lower-bound}]
By the inequality \eqref{eqn:m>p} in conjunction with Theorem \ref{bigthm:indexthm}, we infer
\[
m \,\, \ge\,\, q = \alpha_{\Gamma}\, .
\]
The result now follows by Proposition \ref{prop:glwind}.
 \end{proof}

\section{The proof of Addendum \ref{bigthm:long-arm}}
\label{sec:long-arm}

Let $\zeta = e^{i\ell} \in S^1$ be a point for which $\Gamma(\zeta) :=\tilde \Gamma(\ell)$ has an eigenvalue equal to +1. Set 
\[
p := (\Gamma(\zeta),\zeta) \in \mathfrak I_\Gamma\, .
\]
Set $f(k) := e^{ik \hat L}$. Taking the first Taylor approximation of $\tilde \Gamma$ at $\ell$,
we obtain
\begingroup
\begin{align}\label{eqn:Taylor} 
  \tilde \Gamma(k) \,\, &= \,\, \tilde\Gamma(\ell) + 
  (k{-}\ell)f'(\ell)  \tilde \Gamma_0(\ell)  +  f(\ell)
  \tilde \Gamma_0'(\ell))
+ o\left( (k{-}\ell)^2 \right)\, , \nonumber\\
 &= \,\, \tilde\Gamma(\ell) + 
  (k{-}\ell)f'(\ell)  \tilde \Gamma_0(\ell)  +  
    (k{-}\ell)f(\ell) 
  \tilde \Gamma_0'(\ell)
+ o\left( (k{-}\ell)^2 \right)\, ,
\end{align}
\endgroup
for $|k-\ell|< \delta$ and for some
 $\delta >0$ sufficiently small.
Observe that the first term $\tilde\Gamma(\ell)$ is length independent (since $|e^{i\ell\hat L}| = 1)$. 
The  third term, which contains $\tilde \Gamma_0'(\ell)$,
measures the dependence of scattering on $k$, which is a local
phenomenon. It too is length independent. 
On the other hand, the second term 
\[
(k{-}\ell)f'(\ell)  \tilde \Gamma_0(\ell) \,\, =\,\, (k-\ell) i \hat  L \tilde \Gamma(\ell)
\]
is length dependent. When the edge lengths are large, the second term of the approximation 
dominates the third.

Hence, we omit the third term
of \eqref{eqn:Taylor} to obtain the approximation, valid up to the second order
\[ \tilde \Gamma(k) \,\, \approx\,\, (I + i(k-\ell) \hat L)\tilde \Gamma(\ell)\,
\]
for $|k -\ell| < \delta$ and sufficiently large edge lengths.
Since 
$\hat L$ is given by rescaling the edge lengths, we see that 
each of its eigenvalues must exceed the minimal edge length, which we denote by
$\mu$, of the weighted graph.
Consequently, if the edge lengths are sufficiently large, the eigenvalues of the 1-jet  $J_{\ell}^1\tilde \Gamma(k)$ are 
bounded below by the operator
\[
(I + i(k-\ell)\mu)\tilde \Gamma(\ell)\, .
\]
We infer that 
the imaginary part of any eigenvalue of 
 $J_{\ell}^1\tilde \Gamma(k)$ is positive if $k > \ell$
 and negative  if $k < \ell$ for $k$ lying in a sufficiently small neighborhood
 of $\ell$.  
 
Hence, ignoring the distinction between $ \tilde \Gamma(k)$ and 
$J_{\ell}^1\tilde \Gamma(k)$, it follows
by Definition \ref{defn:iota-pm} that $\iota_p^- = 0$
and 
\[
\iota_p = \iota_p^+ = m_p
\]
when the edge length is large.
Summing up over all $p$ completes the proof.\qed

\end{document}